\newtheorem{theorem}{Theorem}[section]
\newtheorem{lemma}[theorem]{Lemma}
\begin{document}

\title{Forcing Brushes}

\author{D. Meierling \and D. Rautenbach}

\date{}

\maketitle

\begin{center}
Institut f\"{u}r Optimierung und Operations Research, Universit\"{a}t Ulm, Germany, \{\texttt{dirk.meierling,dieter.rautenbach}\}\texttt{@uni-ulm.de}\\[3mm]
\end{center}

\begin{abstract}
We give short and simple proofs of the inequalities 
$B(G)\leq Z(L(G))$ and $Z(G)\leq Z(L(G))$
first established by Erzurumluo\u{g}lu, Meagher, and Pike,
where $G$ is a graph without isolated vertices,
$B(G)$ is the brushing number of $G$,
$Z(G)$ is the zero forcing number of $G$,
and $L(G)$ is the line graph of $G$.
\end{abstract}

{\small 
\begin{tabular}{lp{13cm}}
{\bf Keywords:} zero forcing; brushing; line graph
\end{tabular}
}

\section{Introduction}

Erzurumluo\u{g}lu, Meagher, and Pike~\cite{emp} recently discovered an inequality between 
the brushing number of a graph and the zero forcing number of its line graph;
thereby linking two graph parameters whose connections have not been studied so far.
Our goal in this note is to provide short and simple proofs for two of their main results. 

Let $G$ be a non-empty, simple, finite, and undirected graph.
For an integer $k$, let $[k]$ denote the set of positive integers at most $k$.
The {\it zero forcing number} $Z(G)$ of $G$~\cite{aim} 
is the minimum positive integer $k$ 
for which there are $k$ vertices $u_1,\ldots,u_k$ of $G$, 
and a linear order $u_{k+1},\ldots,u_n$ of the remaining vertices of $G$ 
such that, for every $j$ in $[n]\setminus [k]=\{ k+1,\ldots,n\}$,
there is some $i$ in $[j-1]$
such that $u_j$ is the unique neighbor of $u_i$ in $G$ that is contained in $\{ u_j,u_{j+1},\ldots,u_n\}$;
in which case we say that $u_i$ {\it forces} $u_j$.
The {\it brushing number} $B(G)$ of $G$~\cite{bfgpp,prr}
is the minimum positive integer $k$
for which there is some acyclic orientation $\vec{G}$ of $G$,
and $k$ directed paths in $\vec{G}$ 
such that each directed edge of $\vec{G}$ 
belongs to at least one of these paths.
Let $L(G)$ denote the {\it line graph} of $G$ 
whose vertex set $V(L(G))$ is the edge set $E(G)$ of $G$,
and in which two vertices $e$ and $f$ are adjacent 
if and only if $e$ and $f$ are incident as edges of $G$.

The following are two of the main results of Erzurumluo\u{g}lu et al.~(cf.~Theorem 3.1 and 4.1 in~\cite{emp}),
and the second result actually confirmed a conjecture of Eroh, Kang, and Yi \cite{eky}.

\begin{theorem}\label{theorem1}
If $G$ is a graph without isolated vertices, then $B(G)\leq Z(L(G))$.
\end{theorem}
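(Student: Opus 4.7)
The plan is to build, from a minimum zero forcing sequence of $L(G)$, an acyclic orientation of $G$ together with $Z(L(G))$ directed paths covering every edge. Fix a zero forcing order $f_1,\ldots,f_n$ of $V(L(G)) = E(G)$ with initial set $\{f_1,\ldots,f_k\}$ where $k = Z(L(G))$. Since each vertex can force at most once during zero forcing, the forcing relation splits the sequence into $k$ chains, each headed by a distinct initial edge; together they partition $E(G)$.

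I would then interpret a chain $(e_1,\ldots,e_p)$ as a walk $v_0 v_1 \cdots v_p$ in $G$, where $v_s$ (for $1 \leq s \leq p-1$) is the common endpoint of $e_s$ and $e_{s+1}$ in $G$, and $v_0, v_p$ are the remaining endpoints of $e_1, e_p$. The key observation is that when $e_s$ forces $e_{s+1}$, every edge of $G$ incident to $v_s$ other than $e_s, e_{s+1}$ is already colored, hence has coloring-time at most the step at which $e_{s+1}$ appears. From this I would deduce that no later edge in the whole sequence can be incident to $v_s$, so the walk is in fact a simple path, and orienting each $e_s$ from $v_{s-1}$ to $v_s$ makes each chain a directed path in $G$.

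For acyclicity I introduce $\tau(v) = \max\{j : f_j \ni v\}$, which is defined since $G$ has no isolated vertex. The same observation gives $\tau(v_{s-1}) \leq \tau(v_s)$ on every chain edge, and I orient each singleton chain so that $\tau$ is non-decreasing on it, breaking ties arbitrarily. A hypothetical directed cycle $w_0 \to \cdots \to w_r = w_0$ would then force $\tau(w_0) = \cdots = \tau(w_{r-1}) = t^*$, meaning that the single edge $f_{t^*}$ is incident to every cycle vertex; since $f_{t^*}$ has only two endpoints, $r \leq 2$, contradicting the fact that a simple oriented graph has no cycle shorter than $3$. Hence the orientation is acyclic, and the $k$ chain-paths cover $E(G)$, witnessing $B(G) \leq k = Z(L(G))$.

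The main obstacle is the combined structural claim that each chain is a simple path in $G$ and that $\tau$ is monotone along chain edges; both reduce to the single observation about which edges must already be colored at the moment $e_{s+1}$ is forced, so the proof hinges on stating that observation cleanly.
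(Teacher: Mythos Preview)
Your proposal is correct and takes a genuinely different route from the paper's. The paper proves acyclicity by a minimal-counterexample argument: assuming a directed cycle $\vec{C}$ in the orientation, it selects a forced edge $e_t\notin Z$ leaving a cycle vertex with $t$ minimal, walks two steps back along $\vec{C}$ to find initial edges $e_s,e_r\in Z$, and obtains a contradiction because the successor $e_r^2$ must be forced before $e_s^2=e_t$. That each forcing chain yields a path in $G$ is handled separately via the observation that forcing chains are \emph{induced} paths in $L(G)$; singleton chains are then oriented using a topological order of the already-acyclic part. Your potential function $\tau(v)=\max\{j:f_j\ni v\}$ unifies both steps: the monotonicity of $\tau$ along every oriented edge yields acyclicity through the neat pigeonhole that a constant-$\tau$ cycle would have all its vertices incident with the single edge $f_{t^\ast}$, while the identity $\tau(v_s)=\text{(time of }e_{s+1}\text{)}$ for interior walk-vertices simultaneously forces the $v_s$ to be pairwise distinct. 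The paper's argument is more local and needs no auxiliary function, whereas yours is arguably more transparent, certifies acyclicity constructively, and lets you orient the singleton chains directly along $\tau$ without first extracting a topological order.
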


\begin{theorem}\label{theorem2}
If $G$ is a graph without isolated vertices, then $Z(G)\leq Z(L(G))$.
\end{theorem}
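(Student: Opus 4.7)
The plan is to use the chain decomposition induced by a minimum zero forcing set of $L(G)$ to construct an appropriate zero forcing set of $G$. Let $F=\{e_1,\ldots,e_k\}$ be a zero forcing set of $L(G)$ of size $k=Z(L(G))$ with a forcing order $e_1,e_2,\ldots,e_m$, where $F$ is the initial set and $e_j$ is forced for each $j>k$.

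First I would show that the forcing chains yield $k$ edge-disjoint paths in $G$ partitioning $E(G)$. Along any chain $e_{i_1}\to e_{i_2}\to\cdots\to e_{i_s}$, consecutive edges share an endpoint in $G$, and the forcing property at $e_{i_j}$ prevents $e_{i_{j+2}}$ from sharing a vertex with $e_{i_j}$ (otherwise $e_{i_{j+2}}$ would be an uncoloured $L(G)$-neighbour of $e_{i_j}$ different from $e_{i_{j+1}}$ at the step $e_{i_j}$ acts). Hence the sequence of shared vertices traces a path $u_0^\ell u_1^\ell\cdots u_{s_\ell}^\ell$ in $G$ with $e_{i_j^\ell}=u_{j-1}^\ell u_j^\ell$, and the $k$ chains give paths $P_1,\ldots,P_k$ whose edges partition $E(G)$. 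A key structural consequence, which I would record next, is that for each interior vertex $u_p^\ell$ of $P_\ell$ (with $1\le p\le s_\ell-1$) every edge of $G$ incident with $u_p^\ell$ has index at most $i_{p+1}^\ell$ in the $L(G)$-forcing, since $e_{i_p^\ell}$'s forcing of $e_{i_{p+1}^\ell}$ requires all its other $L(G)$-neighbours to be already coloured.

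Next I would construct $S\subseteq V(G)$: processing the paths $P_1,P_2,\ldots,P_k$ in sequence, I add to $S$ for each $P_\ell$ the earliest vertex $u_{p_\ell}^\ell$ (smallest $p_\ell$) not yet in $S$; if every vertex of $P_\ell$ is already in $S$, nothing is added. This immediately gives $|S|\le k$. To verify that $S$ is a zero forcing set, I would define a forcing order on $V(G)\setminus S$ in which each $v=u_p^\ell\notin S$ is forced by its neighbour along $P_\ell$ (by $u_{p-1}^\ell$ when $p>p_\ell$, and by $u_{p+1}^\ell$ when $p<p_\ell$). The interior-vertex constraint recorded above ensures that at the relevant step, all other $G$-neighbours of the forcer are already coloured, so that $v$ is indeed its unique uncoloured neighbour.

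The main obstacle is the verification step: one must exhibit a valid forcing order on $V(G)\setminus S$ and check the unique-uncoloured-neighbour condition at every step. The interior-vertex constraint gives clean propagation when the forcer is an interior vertex of its path, but extra care is required at path endpoints (where the constraint does not apply) and at vertices lying on several paths. The "earliest unvisited" choice of $u_{p_\ell}^\ell$ is intended to navigate these interactions correctly; getting the bookkeeping of the forcing order right is the heart of the proof.
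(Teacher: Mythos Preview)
Your plan diverges from the paper's, and the specific construction you propose does not work in general. Take $G$ on $\{u,a,b,c\}$ with edges $ua,ub,uc,ab$; then $L(G)$ is $K_4$ minus the pair $\{uc,ab\}$, and $\{ua,uc\}$ is a minimum zero forcing set of $L(G)$ with forcing chains $uc\to ub$ and $ua\to ab$, yielding the $G$-paths $c\,u\,b$ and $u\,a\,b$. In either processing order your ``earliest unvisited'' rule produces $S=\{u,c\}$, which is \emph{not} a zero forcing set of $G$: the only neighbour of $c$ is $u$, already coloured, while $u$ has the two uncoloured neighbours $a$ and $b$. The interior-vertex constraint you isolate is correct but does not control the neighbours of a path's starting endpoint; here $u=u_0$ on the chain $u\,a\,b$, and nothing forces $b$ to be coloured before you need $u$ to force $a$. (Your ``$p<p_\ell$'' case is also vacuous: by construction $u_0^\ell,\dots,u_{p_\ell}^\ell$ all lie in $S$.)

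The paper handles this via a different mechanism. It orients each edge of $G$ in the direction of its forcing chain, proves as the key lemma that the resulting orientation $\vec G$ is acyclic, and then defines $Y$ to contain every source together with all but one of its outneighbours, and, for each non-source $u$, every outneighbour $v$ with $uv\in Z$. A topological order of $\vec G$ then serves directly as the $G$-forcing order, and the two structural properties of $\vec G$ give $|Y|\le |Z|$. In the example above this yields $Y=\{c,a\}$: the arc $c\to u$ makes $u$ a non-source, so $u$ stays out of $Y$ while its outneighbour $a$ (with $ua\in Z$) goes in. The acyclicity lemma together with the bookkeeping via $Z$-membership of out-edges, rather than a positional ``earliest unvisited'' rule, is what your argument is missing.
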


\section{Proofs Theorems \ref{theorem1} and \ref{theorem2}}

Since the brushing number and the zero forcing number are additive with respect to the components of a graph,
it suffices to consider a connected graph $G$ with $n\geq 2$ vertices and $m$ edges.
Let $Z=\{ e_1,\ldots,e_k\}$ be a zero forcing set of $L(G)$ of order $k=Z(L(G))$.
Let the linear order $e_{k+1},\ldots, e_m$ 
of the remaining vertices of $L(G)$ be as in the definition of the zero forcing number,
that is, for every $j$ in $[m]\setminus [k]$,
there is some $i$ in $[j-1]$
such that $e_i$ forces $e_j$.
It follows that there are $k$ paths $P_1,\ldots,P_k$ in $L(G)$, 
where $P_i:e_i^1\ldots e_i^{m_i}$ for each $i$ in $[k]$ is such that 
$e_i^1=e_i$, and 
$e_i^j$ forces $e_i^{j+1}$ for every $j$ in $[m_i-1]$.
The paths $P_1,\ldots,P_k$ are usually referred to as {\it forcing chains},
and it is easy to see that each $P_i$ is an induced path in $L(G)$.

We order the forcing chains such that $m_1,\ldots,m_\ell\geq 2$, and $m_{\ell+1},\ldots,m_k=1$ for some non-negative $\ell\leq k$.
Let $H$ be the subgraph of $G$ with vertex set $V(G)$, and edge set $\bigcup_{i=1}^\ell V(P_i)$.
Let $\vec{H}$ be the orientation of $H$, where the edge $e_i^j=uv$ of $G$ for some $i$ in $[k]$ and $j$ in $[m_i]$
is oriented from $u$ towards $v$, that is, $uv$ is replaced by $(u,v)$, if and only if
\begin{itemize}
\item $j\geq 2$, and the edges $e_i^{j-1}$ and $e_i^j$ of $G$ share the vertex $u$, or
\item $j\leq m_i-1$, and the edges $e_i^j$ and $e_i^{j+1}$ of $G$ share the vertex $v$.
\end{itemize}
Since each $P_i$ is an induced path in $L(G)$, the orientation $\vec{H}$ is well-defined.

The following lemma contains our key observation.
\begin{lemma}
$\vec{H}$ is acyclic.
\end{lemma}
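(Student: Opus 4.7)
The plan is to construct a weakly monotone potential on $V(G)$ along the arcs of $\vec{H}$ and rule out a directed cycle by invoking injectivity of the global time function on $E(G)$. Write $t(e) \in [m]$ for the position of $e$ in the order $e_1, \ldots, e_m$, let $v_i^0, v_i^1, \ldots, v_i^{m_i}$ denote the vertices of $G$ along the trail underlying $P_i$ (so $e_i^j$ has endpoints $v_i^{j-1}$ and $v_i^j$ and is oriented from $v_i^{j-1}$ to $v_i^j$ in $\vec{H}$), and set $\phi(v) := \max\{t(e) : e \in E(G),\, v \in e\}$ for every $v \in V(G)$.

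My key lemma is the forcing observation that for every interior trail vertex, i.e., for $v_i^j$ with $1 \leq j \leq m_i - 1$, one has $\phi(v_i^j) = t(e_i^{j+1})$. Indeed, since $e_i^j$ forces $e_i^{j+1}$, at step $t(e_i^{j+1}) - 1$ every neighbor of $e_i^j$ in $L(G)$ other than $e_i^{j+1}$ is already colored; these neighbors include every edge of $G$ incident to $v_i^j$ distinct from $e_i^j$, and since $t(e_i^j) < t(e_i^{j+1})$, every edge at $v_i^j$ other than $e_i^{j+1}$ has strictly smaller time than $t(e_i^{j+1})$.

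From this I would deduce weak monotonicity: for $e = e_i^j \in E(H)$ oriented from $x$ to $y$, one has $\phi(x) \leq \phi(y)$. If $2 \leq j \leq m_i - 1$, both endpoints are interior trail vertices and $\phi(x) = t(e_i^j) < t(e_i^{j+1}) = \phi(y)$. If $j = 1$, the same forcing event $e_i^1 \to e_i^2$ implies every edge at the starting vertex $x = v_i^0$ has time at most $t(e_i^2) - 1$, so $\phi(x) < t(e_i^2) = \phi(y)$. If $j = m_i$, only $x$ is interior, so $\phi(x) = t(e_i^{m_i}) = t(e) \leq \phi(y)$ since $e$ is incident to $y$.

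Suppose now, for contradiction, that $\vec{H}$ contains a directed cycle $u_0 \to u_1 \to \cdots \to u_{r-1} \to u_0$; since $\vec{H}$ is an orientation of a simple graph, we must have $r \geq 3$. Weak monotonicity forces $\phi(u_0) = \cdots = \phi(u_{r-1}) = M$ for some value $M$; for each $s$ pick an edge $g_s$ at $u_s$ achieving $t(g_s) = M$. Injectivity of $t$ on $E(G)$ forces all $g_s$ to coincide with a single edge $g$, which is then incident to $r \geq 3$ distinct vertices, absurd. The main hurdle is spotting the right potential --- once one realizes that the exit edge at an interior trail vertex is temporally last at that vertex, everything else is routine bookkeeping.
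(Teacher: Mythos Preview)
Your argument is correct and takes a genuinely different route from the paper's proof.

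The paper argues by an extremal principle on the hypothetical cycle $\vec{C}$: it picks a vertex $w$ of $\vec{C}$ together with an outgoing arc $(w,x)$ whose edge $wx=e_t$ lies outside $Z$ and has $t$ minimal, then walks two steps backwards along $\vec{C}$. Minimality forces the two preceding cycle edges $vw=e_s$ and $uv=e_r$ to lie in $Z$, and then the successor $e_r^2$ of $e_r$ on its chain is a neighbor of $e_s$ that must be forced before $e_s^2=e_t$, contradicting the choice of $t$.

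You instead construct a potential $\phi(v)=\max\{t(e):v\in e\}$ and show it is nondecreasing along every arc of $\vec{H}$, using the observation that at each interior trail vertex $v_i^j$ the exit edge $e_i^{j+1}$ is temporally maximal among all edges at $v_i^j$. The three cases $j=1$, $2\le j\le m_i-1$, and $j=m_i$ are handled cleanly; in particular, your treatment of $j=1$ correctly uses that $e_i^2$ is not incident with $v_i^0$ (the underlying trail of an induced path in $L(G)$ is a simple path in $G$), so every edge at $v_i^0$ has time strictly below $t(e_i^2)$. The endgame---a cycle forces $\phi$ constant, whence a single edge of $G$ would meet at least three distinct cycle vertices---is valid since $\vec{H}$, being an orientation of a simple graph, admits no directed $2$-cycle.

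What the two approaches buy: the paper's argument is a short local contradiction that stays entirely inside the cycle and the first two steps of two forcing chains; yours produces a global monotone quantity on $V(G)$, which is a reusable structural fact (for instance, it immediately suggests how to obtain a topological ordering of $\vec{H}$, needed just after the lemma). Both proofs ultimately exploit the same forcing constraint---that when $e_i^j$ forces $e_i^{j+1}$, all other edges through the shared vertex are already coloured---but your packaging via $\phi$ makes the acyclicity transparent without any case-chasing along the cycle.
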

\begin{proof}
Suppose, for a contradiction, that $\vec{H}$ contains a directed cycle $\vec{C}$.
By construction, 
some vertex $w$ of $\vec{C}$ has an outgoing directed edge $(w,x)$ 
such that the edge $wx$ does not belong to $Z$,
that is, $wx$ equals $e_t$ for some $t$ in $[m]\setminus [k]$.
Note that the directed edge $(w,x)$ is not required to belong to $\vec{C}$.
We assume that $w$ and $e_t$ are chosen such that $t$ is as small as possible.

Let $(v,w)$ be the directed edge of $\vec{C}$ entering $w$.
The choice of $w$ and $(w,x)$ imply that the edge $vw$ belongs to $Z$, that is, $vw$ equals $e_s$ for some $s$ in $[\ell]$.
The minimality of $t$ implies that $e_t$ is the successor of $e_s$ on the forcing chain starting in $e_s$, that is, $e_t=e_s^2$.
Let $(u,v)$ be the directed edge of $\vec{C}$ entering $v$.
The choice of $w$ and $(w,x)$ imply that the edge $uv$ belongs to $Z$, that is, $uv$ equals $e_r$ for some $r$ in $[\ell]\setminus \{ s\}$.
Since the successor $e_r^2$ of $e_r$ on the forcing chain starting in $e_r$ is incident --- as an edge of $G$ --- with $e_s$,
it follows that $e_r^2$ is forced before $e_t=e_s^2$,
contradicting the choice of $w$ and $(w,x)$.
This completes the proof.
\end{proof}
Let $u_1,\ldots,u_n$ be a topological ordering of $\vec{H}$.
Orienting every edge $e_i=u_ru_s$ with $i$ in $[k]\setminus [\ell]$ from $u_r$ towards $u_s$
extends the acyclic orientation $\vec{H}$ of the spanning subgraph $H$ of $G$ 
to an acyclic orientation $\vec{G}$ of $G$.
Clearly, $u_1,\ldots,u_n$ is also a topological ordering of $\vec{G}$.

\begin{proof}[Proof of Theorem \ref{theorem1}]
Since the forcing chains $P_1,\ldots,P_k$ are induced paths in $L(G)$,
there are paths $Q_1,\ldots,Q_k$ in $G$ with $E(Q_i)=V(P_i)$ for every $i$ in $[k]$.
Since each vertex of $L(G)$ belongs to some forcing chain $P_i$, 
each edge of $G$ belongs to some path $Q_i$.
By the definition of the acyclic orientation $\vec{G}$,
there are directed paths $\vec{Q}_1,\ldots,\vec{Q}_k$ in $\vec{G}$
such that $\vec{Q}_i$ is an orientation of $Q_i$ for every $i$ in $[k]$.
In view of the characterization of the brushing number 
given in the introduction, this completes the proof.
\end{proof}
The following properties of the orientation $\vec{G}$ follow immediately from its construction.
Let $u$ be a vertex of $G$.
\begin{itemize}
\item If $\vec{G}$ contains no directed edge entering $u$, then $uv\in Z$ for every directed edge $(u,v)$ in $\vec{G}$ leaving $u$.
\item If $\vec{G}$ contains at least one directed edge entering $u$, then $uv\in Z$ for all but at most one directed edge $(u,v)$ in $\vec{G}$ leaving $u$.
\end{itemize}
\begin{proof}[Proof of Theorem \ref{theorem2}]
Let $Y$ be a set of vertices of $G$ such that, for every vertex $u$ of $G$,
\begin{itemize}
\item if $\vec{G}$ contains no directed edge entering $u$, then $Y$ contains $u$ as well as all but one of its outneighbors in $\vec{G}$, and
\item if $\vec{G}$ contains at least one directed edge entering $u$, then $Y$ contains all outneighbors $v$ of $u$ in $\vec{G}$ with $uv\in Z$.
\end{itemize}
The properties of $\vec{G}$ mentioned above immediately imply that $|Y|\leq |Z|$.
If $u_1,\ldots,u_n$ is a topological ordering of $\vec{G}$, and $j$ is in $[n]$,
then either $u_j\in Y$, or there is some $i$ in $[j-1]$ such that $u_j$ is the only outneighbor of $u_i$ not in $Y$.
This easily implies that $Y$ is a zero forcing set of $G$, 
completing the proof.
\end{proof}

\end{document}